    \def\qed{\hfill$\sqcap\kern-8.0pt\hbox{$\sqcup$}$\\}
    \def\beq{\begin{eqnarray}}
    \def\eeq{\end{eqnarray}}
    \def\beqq{\begin{eqnarray*}}
    \def\eeqq{\end{eqnarray*}}
    \def\re{\textnormal {Re}}
    \def\im{\textnormal {Im}}
    \def\p{{\mathbb P}}
    \def\e{{\mathbb E}}
    \def\r{{\mathbb R}}
    \def\c{{\mathbb C}}
    \def\d{{\textnormal d}}
    \def\i{{\textnormal i}}
\newtheorem{theorem}{Theorem}
\newtheorem{proposition}{Proposition}
\newtheorem{corollary}{Corollary}
\theoremstyle{definition}
\newtheorem{definition}{Definition}
\newtheorem{remark}{Remark}
\title{On free regular and Bondesson convolution semigroups}
\author{
A. Kuznetsov
\footnote{Department of Mathematics and Statistics, 
York University, 
4700 Keele Street, 
Toronto, Ontario, 
M3J 1P3, Canada. Email: kuznetsov@mathstat.yorku.ca}
}
\date{\footnotesize This version: \today}
\begin{document}

\maketitle

\begin{abstract}
\bigskip
Free regular convolution semigroups describe the distribution of free subordinators, while Bondesson class convolution semigroups correspond to classical subordinators with completely monotone L\'evy density. We show that these two classes of convolution semigroups are in bijection with the class of complete Bernstein functions and we establish an integral identity linking the two semigroups. We provide several explicit examples that illustrate this result. 
\end{abstract}

{\vskip 0.5cm}
 \noindent {\it Keywords}: free regular measure, complete Bernstein function, Bondesson class, subordinator, 
 Kendall's identity, Laplace transform, Cauchy transform, Bessel functions, Lambert W-function
{\vskip 0.5cm}
 \noindent {\it 2010 Mathematics Subject Classification }: 60G51, 46L54

\section{Introduction}\label{section_Intro}

In this paper we study the distributions of classical subordinators, their free analogues and connections between the two. Let us first recall some key concepts. 
One-dimensional stochastic process $Y=\{Y_t\}_{t\ge 0}$ is called a subordinator 
if it is an increasing L\'evy process (see \cite{Bertoin}). Subordinators give rise to convolution semigroups $\nu_t(\d y)=\p(Y_t \in \d y)$ supported on $[0,\infty)$, which satisfy $\nu_t * \nu_s=\nu_{(t+s)}$ for $t,s\ge 0$ (here $*$ denotes the classical convolution of measures). We will denote $\nu^{*t}=\nu_t$ from now on. 

Free subordinators are non-commutative analogues of classical subordinators. A free subordinator is a free additive
 L\'evy process  of the first kind (see \cite{Biane_1998}) such that the distribution of $X_t-X_s$ is supported on $[0,\infty)$ for all $0<s<t$. Denoting by $\mu_t$ the distribution of $X_t$ we obtain a $\boxplus$-convolution semigroup supported on $[0,\infty)$, which satisfies 
$\mu_t \boxplus \mu_s=\mu_{t+s}$ (here $\boxplus$ denotes the free convolution of probability measures). Similarly to the classical case, we will denote $\mu^{\boxplus t}=\mu_t$.
If $X$ is a free subordinator then the distribution of $X_1$ is called a free regular measure \cite{Arizmendi_2013,Sakuma} and we will call $\mu^{\boxplus t}$ a free regular convolution semigroup.

Classical convolution semigroups on $[0,\infty)$ are typically described by their Laplace exponent: a function 
$f: [0,\infty) \mapsto [0,\infty)$ such that 
\begin{equation}\label{def_Laplace_exponent}
\e[e^{-z Y_t}]=\int_{[0,\infty)} e^{-zy } \nu^{*t}(\d y)=e^{-t f(z)}, \;\;\; z\ge 0. 
\end{equation}
It is well known that for \eqref{def_Laplace_exponent} to hold, $f$ must be a Bernstein function \cite{Bernstein_book}. Similarly, free regular convolution semigroups are described by a function $\phi$ such that 
\begin{equation}\label{def_free_regular_phi}
\phi_{\mu^{\boxplus t}}(z)=t\phi(z), \;\;\; z\le 0.
\end{equation}
Here $\phi_{\mu}$ is the Voiculescu transform of a probability measure $\mu$, which we define below in Section \ref{section_Bernstein_and_free_stuff}. 

The key observation that inspired this paper is the following: if $\phi$ is a function
defining the free regular semigroup $\mu^{\boxplus t}$ via 
\eqref{def_free_regular_phi}, then the function $f(z)=\phi(-z)$ is a Bernstein function and thus 
it gives rise to a classical convolution semigroup $\nu^{*t}$ via \eqref{def_Laplace_exponent}. This observation 
leads to the following question: are there any interesting connections between the two semigroups? Our main result in this paper states that there indeed exists a rather  surprising identity that links the $*$-convolution semigroup $\nu^{*t}(\d y)$ with the
corresponding $\boxplus$-convolution semigroup $\mu^{\boxplus t}(\d x)$.

This paper is organized as follows. In the next three sections we introduce some background material about complete Bernstein functions, Bondesson class of distributions and free regular measures, which will be required to state and prove our main result in Section \ref{section_Results}. In Section \ref{section_Examples} we provide four explicit examples that illustrate our main result.

\subsection{Complete Bernstein functions}\label{section_Bernstein}

We recall that a function $f : [0,\infty) \mapsto [0,\infty)$  is said to be a {\it Bernstein function} \cite{Bernstein_book}  if 
\begin{equation}\label{def_Bernstein}
f(z)=a+bz+\int_0^{\infty} (1-e^{-z x}) \Pi(\d x),
\end{equation}
for $a,b\ge 0$ and a positive measure $\Pi$ supported on $(0,\infty)$ such that $\int_{0}^{\infty} \min(1,x) \Pi(\d x)< \infty$.  
The constant $b$ is called the linear drift coefficient, whereas the measure $\Pi$ is called the L\'evy measure. 
A function $g: (0,\infty) \mapsto (0,\infty)$ is called {\it completely monotone} if $(-1)^n g^{(n)}(x)\ge 0$ for all $n=0,1,2,\dots$. The well-known Bernstein's Theorem tells us that a function $g$ is completely monotone if and only if 
$$
g(x)=\int_{[0,\infty)} e^{-ux} \sigma(\d u),
$$ 
for some measure $\sigma$. 

\begin{definition}
A Bernstein function $f : [0,\infty) \mapsto [0,\infty)$ is said to be a {\it complete Bernstein function} if its L\'evy measure 
$\Pi(\d x)$ in \eqref{def_Bernstein} has a density $\pi(x)$ (with respect to Lebesgue measure) that is completely monotone. 
\end{definition}
Following \cite{Bernstein_book}, we will denote by ${\mathcal{CBF}}$ the class of all complete Bernstein functions. 
 The next result  provides two equivalent characterizations of complete Bernstein functions. The proof can be found in \cite{Bernstein_book}[Chapter 6]. 

\begin{theorem}\label{theorem_CBF}
Let $f$ be a real-valued function defined on $(0,\infty)$. Then the following are equivalent:
\begin{itemize}
\item[(i)] $f\in {\mathcal{CBF}}$.
\item[(ii)] $f$ has an analytic continuation to ${\mathbb H}^{\uparrow}:=\{z\in \c: \im(z)>0\}$ such and $f(z) \in {\mathbb H}^{\uparrow}$ for 
$z \in {\mathbb H}^{\uparrow}$ and $f(z)\ge 0$ for $z >0$. 
\item[(iii)] $f$ has an analytic continuation to ${\mathbb C}\setminus (-\infty,0]$ and for all $z$ in this domain we have 
\begin{align}\label{CBF_integral_representation}
f(z)&=a+bz+\int_{(0,\infty)} \frac{zx-1}{z+x} \rho(\d x)\\
\nonumber
&=a+bz+\int_{(0,\infty)} \Big( \frac{x}{1+x^2} - \frac{1}{z+x} \Big)(1+x^2) \rho(\d x),
\end{align}
where  $\rho$ is a measure on $(0,\infty)$ satisfying $\int_{(0,\infty)} \max(x^{-1},1) \rho(\d x) < \infty$  and $a,b,\ge 0$ are constants with $a\ge \int_{(0,\infty)} x^{-1} \rho(\d x)$. 
\end{itemize}
\end{theorem}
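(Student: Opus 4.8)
The plan is to prove the two equivalences (i) $\Leftrightarrow$ (iii) and (ii) $\Leftrightarrow$ (iii), which together establish the theorem; of these only the implication (ii) $\Rightarrow$ (iii) is substantial.

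For (i) $\Rightarrow$ (iii) I would start from $f(z)=a+bz+\int_0^{\infty}(1-e^{-zx})\pi(x)\,\d x$ with $\pi$ completely monotone, use Bernstein's Theorem to write $\pi(x)=\int_{(0,\infty)}e^{-xt}\,\gamma(\d t)$ (the integrability requirement $\int_0^{\infty}\min(1,x)\pi(x)\,\d x<\infty$ rules out an atom of $\gamma$ at the origin and pins down the behaviour of $\gamma$ near $0$ and $\infty$), interchange the order of integration by Tonelli's theorem, and invoke the elementary identity $\int_0^{\infty}(1-e^{-zx})e^{-xt}\,\d x=t^{-1}-(z+t)^{-1}$ (valid for $z\ge 0$, $t>0$) to get $f(z)=a+bz+\int_{(0,\infty)}\big(t^{-1}-(z+t)^{-1}\big)\gamma(\d t)$. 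Writing $\gamma(\d t)=(1+t^2)\rho(\d t)$ and using the algebraic identity $\tfrac{1+t^2}{t}-\tfrac{1+t^2}{z+t}=\tfrac1t+\tfrac{zt-1}{z+t}$ recasts this as \eqref{CBF_integral_representation} with the constant $a$ replaced by $a+\int_{(0,\infty)}t^{-1}\rho(\d t)$; the required integrability of $\rho$, the inequality $a\ge\int x^{-1}\rho(\d x)$, and the analytic continuation to $\c\setminus(-\infty,0]$ are then read off. All of these steps are reversible, and reversing them yields (iii) $\Rightarrow$ (i): from \eqref{CBF_integral_representation} one recovers the L\'evy density $\pi(x)=\int_{(0,\infty)}e^{-xt}(1+t^2)\rho(\d t)$, which is completely monotone as the Laplace transform of a positive measure.

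For (iii) $\Rightarrow$ (ii) I would simply check, directly from \eqref{CBF_integral_representation}, that $\im f(z)=b\,\im(z)+\int_{(0,\infty)}\tfrac{(1+x^2)\,\im(z)}{|z+x|^2}\rho(\d x)\ge 0$ whenever $\im(z)>0$, and that the pointwise bound $\tfrac{ux-1}{u+x}\ge-\tfrac1x$ together with $a\ge\int x^{-1}\rho(\d x)$ forces $f(u)\ge 0$ for $u>0$.

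The main work is in (ii) $\Rightarrow$ (iii). Here the key tool is the Nevanlinna--Herglotz representation of Pick functions: a function analytic on $\mathbb{H}^{\uparrow}$ with nonnegative imaginary part there admits a representation $f(z)=\alpha+\beta z+\int_{\r}\big((t-z)^{-1}-t(1+t^2)^{-1}\big)\,\d\sigma(t)$ with $\alpha\in\r$, $\beta\ge0$ and $\int_{\r}(1+t^2)^{-1}\,\d\sigma(t)<\infty$. Because $f$ is real on $(0,\infty)$, the Schwarz reflection principle extends it analytically to $\c\setminus(-\infty,0]$; consequently $\im f$ has vanishing boundary values on $(0,\infty)$, and by Stieltjes inversion the measure $\sigma$ carries no mass there, so its support lies in $(-\infty,0]$. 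The change of variables $t=-x$, with $(1+x^2)\rho(\d x)$ taken to be the image of $\sigma$, then turns the Herglotz representation into \eqref{CBF_integral_representation}, while letting $z\to0^+$ and using $f\ge0$ on $(0,\infty)$ both excludes an atom of $\sigma$ at the origin and gives $a=\alpha\ge\int x^{-1}\rho(\d x)$. I expect this step --- setting up the Herglotz representation and, above all, using the analytic continuation past $(0,\infty)$ to localize the representing measure on the negative half-line --- to be the main obstacle; the remaining implications are routine manipulations of integral representations.
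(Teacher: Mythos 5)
The paper does not actually prove this theorem --- it only cites \cite{Bernstein_book}[Chapter 6] --- and your argument is essentially the proof given in that reference: the Laplace-transform computation identifying a completely monotone L\'evy density with the Stieltjes-type representation for (i)$\Leftrightarrow$(iii), and the Nevanlinna--Pick/Herglotz representation combined with Schwarz reflection and Stieltjes inversion to localize the representing measure on $(-\infty,0]$ for (ii)$\Rightarrow$(iii), with the monotone-convergence argument as $z\to 0^+$ supplying $a\ge\int_{(0,\infty)}x^{-1}\rho(\d x)$ and excluding an atom at the origin. The outline is correct and complete; the only caveat is the degenerate case $f\equiv a$ (where $\im f=0$ on ${\mathbb H}^{\uparrow}$ rather than $f({\mathbb H}^{\uparrow})\subseteq{\mathbb H}^{\uparrow}$), which is an imprecision already present in the statement of (ii), not in your proof.
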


We denote by ${\mathcal{CBF}}^{\,\flat}$ the class of complete Bernstein functions $f$ such that $b=0$, where $b$ is the constant appearing in  
\eqref{CBF_integral_representation}. It can be shown that the constant $b$ in \eqref{CBF_integral_representation} satisfies $b=\lim\limits_{z\to +\infty} f(z)/z$, thus an equivalent definition of the class ${\mathcal{CBF}}^{\,\flat}$ would be
$$
{\mathcal{CBF}}^{\,\flat}:=\{f \in {\mathcal{CBF}} \; : \; \lim\limits_{z\to +\infty} f(z)/z=0 \}. 
$$

\subsection{Bondesson class}\label{section_Bernstein_and_BO}

The following class of sub-probability measures will be important in what follows:
\begin{definition}\label{def_BO_class}
A sub-probability measure $\nu$ on $[0,\infty)$ belongs to the {\it Bondesson class} if 
\begin{equation}\label{eqn_def_BO_class}
\int_{[0,\infty)} e^{-z y} \nu(\d y)=e^{-f(z)}, \;\;\; z>0,
\end{equation}
for some $f \in {\mathcal{CBF}}$. 
\end{definition}

Following \cite{Bernstein_book}, we will denote the Bondesson class by ${\textrm{BO}}$. Clearly, any $\nu \in {\textrm{BO}}$ is infinitely divisible, thus we can also associate to any $f\in {\mathcal{CBF}}$  a $*$-convolution semigroup $\nu^{*t}(\d y)=\nu^{*t}(f;\d y) \in {\textrm{BO}}$ via 
\begin{equation}\label{def_*_semigroup}
\int_{[0,\infty)} e^{-zy} \nu^{*t}(\d y)=e^{-t f(z)}, \;\;\; z>0. 
\end{equation}

The reader can find more information about the Bondesson class in \cite{Bernstein_book}[Chapter 9]. Here we will only mention the following probabilistic description of this class: ${\textrm{BO}}$ is the smallest class of sub-probability measures on $[0,\infty)$ that contains all mixtures of exponential distributions and that is closed under convolutions and vague limits (see \cite{Bernstein_book}[Theorem 9.7]). 

We will denote by ${\textrm{BO}}^{\flat}$ the subclass of the Bondesson class that is obtained by taking $f\in {\mathcal{ CBF}}^{\,\flat}$ in 
\eqref{def_BO_class}. Using the well-known connection between the left extremity of the support of the measure and the asymptotics of its Laplace transform one can give the following equivalent definition: 
$$
{\textrm{BO}}^{\flat}=\{\nu \in {\textrm{BO}} \; : \; \nu[0,\epsilon]>0 \; {\textnormal{ for every }} \; \epsilon>0\}.  
$$
In other words, ${\textrm{BO}}^{\flat}$ is the class of measures in ${\textrm{BO}}$ with the left extremity of support equal to zero. It is clear  that any measure in ${\textrm{BO}}$ can be obtained as the pushforward of some measure in ${\textrm{BO}}^{\flat}$ with respect to the function $x\mapsto x+b$.

\subsection{Free regular measures}\label{section_Bernstein_and_free_stuff}

Let us first define free convolution of probability measures. 
We recall that  ${\mathbb H}^{\uparrow}$ denotes the open upper complex half-plane.
For a probability measure $\mu$ on $\r$ we define its Cauchy transform
\beq
G_{\mu}(z):=\int_{\r} \frac{\mu(\d x)}{z-x}, \;\;\; z\in {\mathbb H}^{\uparrow},
\eeq
and we denote $F_{\mu}(z):=1/G_{\mu}(z)$. Note that 
$F_{\mu}$ maps ${\mathbb H}^{\uparrow}$ into itself. 
It is known  \cite{Bercovici_1993} that there exist $\alpha,M>0$ such that 
$F_{\mu}$ has a right compositional inverse $F^{-1}_{\mu}$ defined in the region
$$
\Gamma_{\alpha,M}:=\{z\in {\mathbb C}: |\re(z)|<\alpha \im(z), \im(z)>M\}.
$$ 
The {\it Voiculescu transform} of $\mu$ is defined by $\phi_{\mu}(z)=F^{-1}_{\mu}(z)-z$, for $z \in \Gamma_{\alpha,M}$.

The free additive convolution of two probability measures $\mu_1$, $\mu_2$ on $\r$ is the measure
$\chi=\mu_1 \boxplus \mu_2$ such that   
$\phi_{\chi}(z)=\phi_{\mu_1}(z)+\phi_{\mu_2}(z)$ on some region $\Gamma_{\alpha,M}$ where both 
$\phi_{\mu_1}(z)$ and $\phi_{\mu_2}(z)$ are defined.

We say that a probability measure $\mu$ on $\r$ is {\it freely infinitely divisible} (or $\boxplus${\it-infinitely divisible}) if for all $n$ there exists a probability measure $\mu_n$ such that 
$\mu=\mu_n^{\boxplus n}$ (meaning $\mu$ is the $n$-fold $\boxplus$-convolution of $\mu_n$ with itself). It is known that $\mu$ is freely infinitely divisible if and only if $\phi_{\mu}$ can be extended to an analytic function in ${\mathbb H}^{\uparrow}$, where it has the following integral representation 
\begin{equation}\label{phi_mu_representation}
\phi_{\mu}(z)=\gamma_{\mu} + \int_{\r} \frac{1+xz}{z-x} \tau_{\mu}(\d x),
\end{equation}
for $\gamma_{\mu}\in \r$ and a finite non-negative measure $\tau_{\mu}$.  

Next we introduce free regular measures: these are free analogues of distribution of a subordinator. 
\begin{definition}\label{definition_free_regular_measures}
A probability measure $\mu$ is called {\it free regular} if it is $\boxplus$-infinitely divisible and $\mu^{\boxplus t}$ is supported on $[0,\infty)$ for all $t>0$.   
 \end{definition}
 
The class of free regular measures will be denoted by $I^{\boxplus}_{r+}$.  Free regular distributions were introduced in \cite{Perez-Abreu2012} and studied later in  \cite{Arizmendi_2013,Sakuma}. Note that free regular measures are defined differently in \cite{Arizmendi_2013}, but our definition \ref{definition_free_regular_measures} is an equivalent one (see \cite{Arizmendi_2013}[Theorem 4.2]).

The following result was essentially established in \cite{Arizmendi_2013}, though it was not stated explicitly in this way. 
\begin{proposition}\label{thm_free_regular}
A probability measure $\mu$ is free regular if and only if the function $z\in [0,\infty) \mapsto \phi_{\mu}(-z)$ belongs to the class ${\mathcal{CBF}}^{\,\flat}$.  
\end{proposition}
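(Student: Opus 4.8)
The plan is to extract everything from the free L\'evy--Khintchine representation \eqref{phi_mu_representation} of $\phi_\mu$ and match it, after the substitution $z\mapsto-z$, with the representation \eqref{CBF_integral_representation} of complete Bernstein functions from Theorem \ref{theorem_CBF}(iii). The one external ingredient is the description of the class $I^{\boxplus}_{r+}$ through the free triplet $(\gamma_\mu,\tau_\mu)$ of \eqref{phi_mu_representation}, which is the content of \cite{Arizmendi_2013}[Theorem 4.2].

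I would first reduce to the case where $\mu$ is $\boxplus$-infinitely divisible, since both sides of the claimed equivalence force this. A free regular measure is $\boxplus$-infinitely divisible by Definition \ref{definition_free_regular_measures}. Conversely, if the function $z\in[0,\infty)\mapsto\phi_\mu(-z)$ (read as an analytic continuation of the Voiculescu transform) lies in $\mathcal{CBF}$, then by Theorem \ref{theorem_CBF}(ii) it continues to an analytic self-map of ${\mathbb H}^{\uparrow}$ that is real on $(0,\infty)$; reflecting across the real axis and using that $\c\setminus[0,\infty)$ is simply connected, $\phi_\mu$ itself continues analytically to ${\mathbb H}^{\uparrow}$ with $\im\phi_\mu\le0$ there, so $\mu$ is $\boxplus$-infinitely divisible by the Bercovici--Voiculescu theorem \cite{Bercovici_1993}. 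Granting this, \eqref{phi_mu_representation} holds with $\gamma_\mu\in\r$ and a finite non-negative measure $\tau_\mu$, and $\phi_\mu$ is analytic on $\c\setminus\mathrm{supp}(\tau_\mu)$.

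The core of the argument is the equivalence, for such $\mu$, of the following: (A) $z\mapsto\phi_\mu(-z)$ belongs to $\mathcal{CBF}^{\,\flat}$; (B) $\tau_\mu((-\infty,0])=0$, $\int_{(0,\infty)}x^{-1}\,\tau_\mu(\d x)<\infty$, and $\gamma_\mu\ge\int_{(0,\infty)}x^{-1}\,\tau_\mu(\d x)$. For (B)$\Rightarrow$(A): under the first condition of (B) the representation \eqref{phi_mu_representation} is valid on $\c\setminus(-\infty,0]$ after the substitution $z=-w$, and simplifying $\tfrac{1+x(-w)}{-w-x}=\tfrac{wx-1}{w+x}$ gives
\[
\phi_\mu(-w)=\gamma_\mu+\int_{(0,\infty)}\frac{wx-1}{w+x}\,\tau_\mu(\d x),\qquad w\in\c\setminus(-\infty,0],
\]
which is exactly a representation of the form \eqref{CBF_integral_representation} with $a=\gamma_\mu$, $b=0$ and $\rho=\tau_\mu$: the admissibility conditions on $(a,b,\rho)$ in Theorem \ref{theorem_CBF}(iii) hold because $\tau_\mu$ is carried by $(0,\infty)$ and finite (so $\int\max(x^{-1},1)\,\tau_\mu(\d x)<\infty$ reduces to the second condition of (B)) and because the third condition of (B) gives both $\gamma_\mu\ge0$ and $\gamma_\mu\ge\int_{(0,\infty)}x^{-1}\tau_\mu(\d x)$. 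Hence $z\mapsto\phi_\mu(-z)\in\mathcal{CBF}$ by Theorem \ref{theorem_CBF}, and it lies in $\mathcal{CBF}^{\,\flat}$ since $b=0$. For (A)$\Rightarrow$(B): Theorem \ref{theorem_CBF}(iii) gives $\phi_\mu(-z)=a+bz+\int_{(0,\infty)}\tfrac{zx-1}{z+x}\rho(\d x)$, with $b=\lim_{z\to+\infty}\phi_\mu(-z)/z=0$ because the function lies in $\mathcal{CBF}^{\,\flat}$; substituting $z=-w$ back reproduces \eqref{phi_mu_representation} with $\gamma_\mu=a$ and $\tau_\mu=\rho$, by uniqueness of the representation \eqref{phi_mu_representation}, so (B) follows from the stated properties of $a,b,\rho$.

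Finally, by \cite{Arizmendi_2013}[Theorem 4.2] the conditions in (B) on the free triplet $(\gamma_\mu,\tau_\mu)$ characterize precisely the free regular measures; combined with the equivalence (A)$\Leftrightarrow$(B) this proves the proposition. I expect the points requiring care to be organizational rather than substantive: making precise the sense in which $z\mapsto\phi_\mu(-z)$ is defined on all of $[0,\infty)$ for a general probability measure $\mu$ (handled by the reduction to the $\boxplus$-infinitely divisible case), and quoting the triplet description of $I^{\boxplus}_{r+}$ from \cite{Arizmendi_2013} in the normalization of \eqref{phi_mu_representation}. I would also stress in the write-up that the ``$b=0$'' clause defining $\mathcal{CBF}^{\,\flat}$ is automatic here, which reflects the fact that a deterministic drift of a free subordinator contributes only to the constant term $\gamma_\mu$ and never produces a linear term in $\phi_\mu$.
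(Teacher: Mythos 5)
Your proof is correct and follows essentially the same route as the paper: both arguments reduce to the triplet characterization of free regular measures in \cite{Arizmendi_2013}[Theorem 4.2] and then match the free L\'evy--Khintchine representation \eqref{phi_mu_representation} under $z\mapsto -z$ against the complete Bernstein representation of Theorem \ref{theorem_CBF}. The differences are matters of completeness rather than of method: you invoke Theorem \ref{theorem_CBF}(iii) where the paper uses (ii), you make the reduction to the $\boxplus$-infinitely divisible case explicit, and you actually carry out the converse implication (from ${\mathcal{CBF}}^{\,\flat}$ back to free regularity) that the paper leaves to the reader.
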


To prove this result, we note that Theorem 4.2 in \cite{Arizmendi_2013} tells us that $\mu \in I^{\boxplus}_{r+}$ 
if and only if $\mu$ 
is $\boxplus$-infinitely divisible,  $\phi_{\mu}(-0)\ge 0$ and $\tau_{\mu}(-\infty,0)=0$, where $\tau_{\mu}$ is the measure 
in the representation \eqref{phi_mu_representation}. Thus 
\begin{equation}\label{phim_minus_z}
\phi_{\mu}(-z)=\gamma_{\mu} + \int_{[0,\infty)} \frac{xz-1}{z+x} \tau_{\mu}(\d x)
=\gamma_{\mu} + \int_{[0,\infty)} \Big( \frac{x}{1+x^2} - \frac{1}{z+x} \Big)(1+x^2)\tau_{\mu}(\d x),
\end{equation}
so it is clear that $\phi_{\mu}(-z)$ maps ${\mathbb H}^{\uparrow}$ into ${\mathbb H}^{\uparrow}$. 
It is also clear from \eqref{phim_minus_z} that $\phi_{\mu}(-z)$ is increasing 
on $(0,\infty)$, and since $\phi_{\mu}(-0)\ge 0$ we conclude that $\phi_{\mu}(-z)>0$ for $z>0$. 
Theorem \ref{theorem_CBF}(ii) tells us that $\phi_{\mu}(-z)$ must be a complete Bernstein function, and the fact that 
the linear drift coefficient is zero is obvious from \eqref{phi_mu_representation}. The other direction (showing that for any 
$f \in {\mathcal{CBF}}^{\,\flat}$ there exists a free regular measure  $\mu$ such that $\phi_{\mu}(-z)=f(z)$ for $z>0$) is left to the reader. 

Proposition \ref{thm_free_regular} tells us that with any $f\in {\mathcal{CBF}}^{\,\flat}$ we can associate a $\boxplus$-convolution semigroup $\mu^{\boxplus t}(\d x)=\mu^{\boxplus t}(f; \d x) \in I^{\boxplus}_{r+}$ via the Voiculescu transform
\begin{equation}\label{def_free_semigroup}
\phi_{\mu^{\boxplus t}}(z)=t f(-z), \;\;\;  z<0,
\end{equation}
and, conversely, any $\boxplus$-convolution semigroup in $I^{\boxplus}_{r+}$ can be constructed in such a way.

\begin{remark}
we remind the reader that the measure $\mu$ can be recovered from the Cauchy transform by {\it Stieltjes inversion formula}:
\begin{equation}\label{Stieltjes_inversion}
\mu(u,v)=\lim\limits_{y\to 0^+} -\frac{1}{\pi} \int_u^v \im(G_{\mu}(x+\i y))\d x,
\end{equation}
which is valid for all continuity points $u,v$ of the measure $\mu$. This fact will be used in Section \ref{section_Examples}. 
\end{remark}

\section{Results}\label{section_Results}

As we discussed in Sections \ref{section_Bernstein_and_BO} and \ref{section_Bernstein_and_free_stuff}, the class 
$I^{\boxplus}_{r+}$ of free regular $\boxplus$-convolution semigroups is in bijection with the 
class ${\mathcal{CBF}}^{\,\flat}$ of complete Bernstein functions with zero linear drift, 
and the same is true for the class of $*$-convolution semigroups in ${\textrm{BO}}^{\flat}$. The next theorem is our main result: it establishes an identity between the corresponding semigroups in $I^{\boxplus}_{r+}$ and ${\textrm{BO}}^{\flat}$.

\begin{theorem}\label{theorem_main}
Fix a function $f \in {\mathcal{CBF}}^{\,\flat}$ and define a $*$-convolution semigroup $\nu^{*t}(\d x)=\nu^{*t}(f;\d x)\in {\textrm{BO}}^{\,\flat}$ via \eqref{def_*_semigroup} and a $\boxplus$-convolution semigroup 
 $\mu^{\boxplus t}(\d x)=\mu^{\boxplus t}(f;\d x) \in I^{\boxplus}_{r+}$ via \eqref{def_free_semigroup}. 
Then for all $t> 0$ and $w> 0$ we have 
\begin{equation}\label{eqn_thm_main}
\int_{[0,\infty)} e^{-w x} \mu^{\boxplus t}(\d x)=w^{-1}\int_{0}^w \nu^{*wt}[0,y]\d y. 
\end{equation}
\end{theorem}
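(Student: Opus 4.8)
The plan is to show that the two sides of \eqref{eqn_thm_main}, viewed as functions of $w\in(0,\infty)$ for fixed $t$, are continuous, bounded by $1$, and have the same Laplace transform in $w$; uniqueness of the Laplace transform then forces equality for every $w>0$. I would first realize the $*$-semigroup probabilistically: let $(Y_r)_{r\ge0}$ be the (possibly killed) subordinator with $\e[e^{-zY_r}]=e^{-rf(z)}$, so that $\nu^{*r}[0,y]=\p(Y_r\le y)$ and the right-hand side of \eqref{eqn_thm_main} equals $w^{-1}\int_0^w\p(Y_{wt}\le y)\,\d y$.

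\emph{Free side.} For $\lambda>0$, Tonelli gives $\int_0^\infty e^{-\lambda w}\big(\int_{[0,\infty)}e^{-wx}\mu^{\boxplus t}(\d x)\big)\,\d w=\int_{[0,\infty)}\frac{\mu^{\boxplus t}(\d x)}{\lambda+x}=-G_{\mu^{\boxplus t}}(-\lambda)$. From \eqref{def_free_semigroup} and the definition of the Voiculescu transform one has $F_{\mu^{\boxplus t}}^{-1}(-\zeta)=tf(\zeta)-\zeta$ on the domain of $F_{\mu^{\boxplus t}}^{-1}$, and since $\mu^{\boxplus t}$ is supported on $[0,\infty)$ I would continue this analytically to obtain $G_{\mu^{\boxplus t}}(tf(\zeta)-\zeta)=-1/\zeta$ whenever $tf(\zeta)<\zeta$. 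As $f$ is concave with $f(z)/z\to0$, the map $\zeta\mapsto\zeta-tf(\zeta)$ is an increasing bijection of $\{\zeta>0:\ tf(\zeta)<\zeta\}$ onto $(0,\infty)$; writing $\zeta(\lambda)$ for its inverse, this step yields
\[
\int_0^\infty e^{-\lambda w}\Big(\int_{[0,\infty)}e^{-wx}\mu^{\boxplus t}(\d x)\Big)\,\d w=\frac{1}{\zeta(\lambda)},\qquad \zeta(\lambda)-tf(\zeta(\lambda))=\lambda,\quad\lambda>0.
\]

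\emph{Classical side.} Substituting $y=wv$ and then $r=wt$ in the right-hand side and using Tonelli, I would reduce its Laplace transform to $\int_0^{1/t}I(\lambda/t,\kappa)\,\d\kappa$, where $I(\beta,\kappa):=\int_0^\infty e^{-\beta r}\p(Y_r\le\kappa r)\,\d r$. The main computation is evaluating $I(\beta,\kappa)$: for $\kappa>0$ the process $W_r:=\kappa r-Y_r$ is a spectrally negative Lévy process with Laplace exponent $\psi_\kappa(\theta)=\kappa\theta-f(\theta)$ (convex, with killing rate $f(0)$), and $\p(Y_r\le\kappa r)=\p(W_r\ge0)$, so by the standard resolvent-density formula for spectrally negative Lévy processes (absorbing the killing into the discount rate),
\[
I(\beta,\kappa)=\int_0^\infty e^{-\beta r}\p(W_r\ge0)\,\d r=\frac{1}{\Phi_\kappa(\beta)\,\psi_\kappa'(\Phi_\kappa(\beta))},
\]
$\Phi_\kappa(\beta)$ being the unique $\theta>0$ with $\psi_\kappa(\theta)=\beta$. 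Then, with $\beta=\lambda/t$ fixed, I would change variables in the $\kappa$-integral from $\kappa\in(0,1/t)$ to $\theta:=\Phi_\kappa(\beta)$: the relation $\psi_\kappa(\theta)=\beta$ reads $\kappa=(\beta+f(\theta))/\theta$, which yields $\psi_\kappa'(\theta)=(\beta+f(\theta)-\theta f'(\theta))/\theta$ and $\d\kappa=-(\beta+f(\theta)-\theta f'(\theta))\,\theta^{-2}\,\d\theta$, so that $I(\beta,\kappa)\,\d\kappa=-\theta^{-2}\,\d\theta$; since $\kappa=0$ corresponds to $\theta=\infty$ and $\kappa=1/t$ corresponds (by definition) to $\theta=\zeta(\lambda)$, the $\kappa$-integral telescopes:
\[
\int_0^\infty e^{-\lambda w}\Big(w^{-1}\int_0^w\p(Y_{wt}\le y)\,\d y\Big)\,\d w=\int_{\zeta(\lambda)}^{\infty}\frac{\d\theta}{\theta^{2}}=\frac{1}{\zeta(\lambda)}.
\]

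\emph{Conclusion.} The two Laplace transforms coincide on $(0,\infty)$; the left-hand side of \eqref{eqn_thm_main} is analytic in $w$, and the right-hand side equals $\e[(1-Y_{wt}/w)^+]$, which is continuous in $w$ by dominated convergence (for fixed $w_0$, $Y_{wt}\to Y_{w_0t}$ a.s.\ as $w\to w_0$). Hence the two sides are equal for all $w>0$. I expect the main obstacle to be the evaluation of $I(\beta,\kappa)$ — the discounted probability that a subordinator stays below a linear boundary — which requires one-sided (spectrally negative) fluctuation theory; everything after that (the telescoping $\kappa$-integral and the matching with the free side, both producing $1/\zeta(\lambda)$) is essentially bookkeeping.
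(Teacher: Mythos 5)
Your argument is correct, but it takes a genuinely different route from the paper's. The paper works pointwise in $w$: it introduces the first--passage subordinator $\tau_y=\inf\{s\ge 0:\ s-Y_s\ge y\}$, whose Laplace exponent $\psi$ solves $\psi(w)-f(\psi(w))=w$, identifies the Laplace transform of $\mu^{\boxplus 1}$ with the renewal density $u(s)$ of $\tau$ (via $\psi(z)=-F_\mu(-z)$ and one application of Laplace--transform uniqueness), and then evaluates $u(s)$ explicitly by Kendall's identity; general $t$ follows by replacing $f$ with $tf$. You instead Laplace--transform both sides of \eqref{eqn_thm_main} in $w$ and show each equals $1/\zeta(\lambda)$, where $\zeta(\lambda)-tf(\zeta(\lambda))=\lambda$ is exactly the paper's $\psi$ in disguise. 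On the classical side you replace Kendall's identity by the resolvent density $\Phi_\kappa'(\beta)e^{-\Phi_\kappa(\beta)x}$ of the spectrally negative processes $W_r=\kappa r-Y_r$, which does give $I(\beta,\kappa)=\Phi_\kappa'(\beta)/\Phi_\kappa(\beta)=1/\bigl(\Phi_\kappa(\beta)\,\psi_\kappa'(\Phi_\kappa(\beta))\bigr)$; your substitution $\theta=\Phi_\kappa(\beta)$ checks out ($I(\beta,\kappa)\,\d\kappa=-\theta^{-2}\d\theta$, with $\theta\to\infty$ as $\kappa\to 0$ because $f(\theta)/\theta\to 0$ for $f\in{\mathcal{CBF}}^{\,\flat}$, and $\theta=\zeta(\lambda)$ at $\kappa=1/t$), as does the absorption of the killing rate $f(0+)$ into the discount. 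The two proofs draw on one--sided fluctuation theory at comparable depth --- Kendall's identity and the resolvent formula are close cousins --- but your version costs an extra Laplace inversion, which you correctly pay for with the continuity of $w\mapsto\e[(1-Y_{wt}/w)^+]$. What the paper's route buys is the pointwise identification of $\int e^{-sx}\mu(\d x)$ with a renewal density (the Remark following the theorem); what yours buys is that integrating over the slope $\kappa$ makes transparent why the averaged distribution function $w^{-1}\int_0^w\nu^{*wt}[0,y]\,\d y$, rather than $\nu^{*wt}[0,w]$ itself, appears. One shared soft spot: extending $G_{\mu^{\boxplus t}}(tf(\zeta)-\zeta)=-1/\zeta$ from the region $\Gamma_{\alpha,M}$ where $F^{-1}_{\mu^{\boxplus t}}$ is a priori defined down to the negative real axis is an analytic--continuation step that you, like the author, only sketch.
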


\begin{proof}
Without loss of generality, it is enough to consider the case when $f(0+)=0$.  
This is true since for any $f \in {\mathcal{CBF}}^{\,\flat}$ and $\kappa \ge 0$ we have 
 \begin{align*}
 \nu^{*t}(\kappa+f;\d y)&=e^{-\kappa t} \nu^{*t}( f;\d y), \qquad\qquad \;\;\;\;\, \;\;\; t,y\ge 0, \\
 \mu^{\boxplus t}(\kappa + f;\d x)&={\mathbf 1}_{\{x-\kappa t\ge 0\}}  \mu^{\boxplus t}( f;\d (x-\kappa t)), \;\;\; t, x \ge 0. 
 \end{align*}
 These results can be easily derived from the definitions of the semigroups $\nu^{*t}$ and $\mu^{\boxplus t}$ in Sections 
 \ref{section_Bernstein_and_BO} and \ref{section_Bernstein_and_free_stuff} and we leave all the details of this derivation to the reader.

Let us now consider a function  $f \in {\mathcal{CBF}}^{\,\flat}$ such that $f(0+)=0$. Associated to this function we have a subordinator 
$Y=\{Y_t\}_{t\ge 0}$ such that $\e[\exp(- z Y_t)]=\exp(-t f(z))$. In particular, we have 
$\p(Y_t \in \d y)=\nu^{*t}(\d y)$ for $t,y\ge 0$. For each $y\ge 0$ we define $\tau_y=\inf\{ t \ge 0 : t-Y_t \ge y\}$, with the understanding that $\inf \emptyset=+\infty$. The process $\tau=\{\tau_y\}_{y=0}$ is also a (possibly killed) subordinator (see \cite{Kyprianou}[Corollary 3.14]), thus there exists a Bernstein function $\psi$ such that $\psi(z)=-\ln \e[\exp(-z \tau_1)]$. It is known that $\psi$ satisfies the functional equation 
\begin{equation}\label{psi_functional_eqn}
\psi(w)-f(\psi(w))=w, \;\;\; w>0, 
\end{equation}
see \cite{Kyprianou}[Theorem 3.12]. Moreover, the distribution of $\tau$ is related to the distribution of $Y$ via Kendall's identity 
\begin{equation}\label{Kendalls_identity}
\p(\tau_y \in \d s)\d y=\frac{y}{s} \p(s-Y_s \in \d y) \d s,
\end{equation}
see  \cite{ECP1038}, \cite{Burridge}, \cite{Kendall} or  \cite{Kyprianou}[exercise 6.10].

Our next goal is to show that $\psi(z)=-F_{\mu}(-z)$ for $z>0$, where $F_{\mu}(z)=1/G_{\mu}(z)$ and $\mu=\mu^{\boxplus 1}(f;\d x)$. Note that we have $\phi_{\mu}(z)=f(-z)$ for $z<0$ and the function $z \in (-\infty,0) \mapsto F_{\mu}^{-1}(z)=z+\phi_{\mu}(z)$ is the right-compositional inverse of $F_{\mu}$
(see the discussion in Section \ref{section_Bernstein_and_free_stuff}). 
Thus, for $z>0$ we obtain $F_{\mu}^{-1}(-z)=-z+\phi_{\mu}(-z)=-z+f(z)$ and 
\begin{equation}\label{inv_eqn1}
-z=F_{\mu}(F_{\mu}^{-1}(-z))=F_{\mu}(-z+f(z)). 
\end{equation}
At the same time, from \eqref{psi_functional_eqn} we conclude that 
\begin{equation}\label{inv_eqn2}
-z=-\psi(z-f(z))  
\end{equation}
for all $z>0$ such that $z-f(z)>0$. Comparing \eqref{inv_eqn1} and \eqref{inv_eqn2}, we see that 
$\psi(z)=-F_{\mu}(-z)$ for all $z>0$ such that $z-f(z)>0$, and by analytic continuation this must hold for all $z\in {\mathbb C}\setminus(-\infty,0]$. 

Next, using Fubini's Theorem, we obtain 
\begin{align}\label{eqn_proof1}
\int_0^{\infty} e^{-z s} \bigg[ \int_{[0,\infty)} e^{-s x} \mu(\d x) \bigg] \d s&
\nonumber =
\int_{[0,\infty)} \frac{\mu(\d x)}{z+x}\\ & =
-G_{\mu}(-z)=\frac{1}{-F_{\mu}(-z)}=\frac{1}{\psi(z)},   
\end{align}
for all $z>0$. 
Note that $\tau_y$ is a subordinator with unit linear drift, thus according to \cite{Bertoin}[Proposition 1.7] the renewal measure 
$$
U(\d s)=\int_0^{\infty} \p(\tau_y \in \d s) \d y
$$
is absolutely continuous with respect to Lebesgue measure, so that $U(\d s)=u(s)\d s$. It is also well known that the renewal measure satisfies 
\begin{equation}\label{eqn_proof2}
\int_{0}^{\infty} e^{-z s} U(\d s)=\int_{0}^{\infty} e^{-z s} u(s)\d s=\frac{1}{\psi(z)}, \;\;\; z>0. 
\end{equation}
From \eqref{eqn_proof1}, \eqref{eqn_proof2} and uniqueness of the Laplace transform we conclude that 
\begin{equation}\label{eqn_proof3}
\int_{[0,\infty)} e^{-s x} \mu(\d x) = u(s), \;\;\; s\ge 0. 
\end{equation}
We use Kendall's identity \eqref{Kendalls_identity} in 
order to find the density of the renewal measure of the subordinator $\tau$:   
\begin{align}\label{finding_renewal_density}
u(s)\d s=\int_{[0,\infty)} \p(\tau_y \in \d s)\d y &= \bigg[\int_{[0,\infty)} \frac{y}{s} \p(s-Y_s \in \d y) \bigg] \d s
=\bigg[\int_{[0,s)} \frac{s-y}{s} \p(Y_s \in \d y) \bigg]\d s
\\ \nonumber & = \frac{1}{s} 
\bigg[\int_0^s \p(Y_s \le  y)\d y \bigg]\d s=\frac{1}{s} \bigg[\int_0^s \nu^{*s}([0,y]) \d y \bigg] \d s,
\end{align}
where the fourth step is justified by integration by parts. Combining \eqref{eqn_proof3} and 
\eqref{finding_renewal_density} we obtain 
$$
\int_{[0,\infty)} e^{-s x} \mu^{\boxplus 1} (f;\d x) = \frac{1}{s} \int_0^s \nu^{*s}(f;[0,y]) \d y , \;\;\; s>0, 
$$
which is equivalent to \eqref{eqn_thm_main} with $t=1$. To obtain the general case one only needs to replace $f$ by $tf$ and use the fact that 
$$
\mu^{\boxplus 1} (tf;\d x)=\mu^{\boxplus t} (f;\d x) \;\; \textnormal{and} \;\; \nu^{*s}(tf;\d y)=\nu^{*st}(f;\d y). 
$$
\end{proof}

\begin{remark}
While proving Theorem \ref{theorem_main} we have established the following characterization of free regular measures: 
a probability measure $\mu$ is free regular if and only if its Laplace transform is equal to the density of the renewal measure of the  subordinator $\tau=\{\tau_y\}_{y\ge 0}$ obtained from a subordinator $Y=\{Y_t\}_{t\ge 0}$ in the Bondesson class ${\textrm{BO}}^{\flat}$ via 
$$
\tau_y=\inf\{ t \ge 0 : t-Y_t \ge y\}. 
$$. 
\end{remark}

The following corollary of Theorem \ref{theorem_main} shows that the cumulative distribution function of any measure in the Bondesson class ${\textrm{BO}}^{\flat}$ can be expressed in terms of the corresponding $\boxplus$-convolution semigroup. This result is easily derived 
 by rescaling  $t \mapsto t/w$ and taking derivative $\d/\d w$ of both sides of identity 
 \eqref{eqn_thm_main}. 

\begin{corollary}\label{corollary1} Let $\nu  \in {\textrm{BO}}^{\,\flat}$ and let $f \in {\mathcal{CBF}}^{\,\flat}$ be the corresponding Laplace exponent, as defined in \eqref{eqn_def_BO_class}.  
Then  
\begin{equation}\label{eqn_corollary_main}
\nu^{*t}[0,w]=\frac{\d}{\d w} \Bigg[ w \int_{[0,\infty)} e^{-xw} \mu^{\boxplus \tfrac{t}{w}} (f;\d x) \Bigg], \;\;\; t,w>0.  
\end{equation}
\end{corollary}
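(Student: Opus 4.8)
The plan is to obtain Corollary \ref{corollary1} as a direct consequence of Theorem \ref{theorem_main} by a change of variable and differentiation, exactly as the sentence preceding the statement indicates. First I would start from the identity \eqref{eqn_thm_main}, which holds for all $s>0$ and $t>0$ in the form
\[
\int_{[0,\infty)} e^{-s x} \mu^{\boxplus t}(f;\d x)=s^{-1}\int_{0}^{s} \nu^{*st}(f;[0,y])\,\d y.
\]
I would then substitute $t \mapsto t/w$ and $s \mapsto w$ (renaming the free variable $s$ as $w$), so that the product $st$ becomes $w \cdot (t/w) = t$; this turns the right-hand side into $w^{-1}\int_0^w \nu^{*t}(f;[0,y])\,\d y$, which no longer depends on $w$ through the superscript of $\nu$. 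Multiplying through by $w$ gives
\[
w\int_{[0,\infty)} e^{-x w} \mu^{\boxplus t/w}(f;\d x)=\int_0^w \nu^{*t}(f;[0,y])\,\d y .
\]

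Next I would differentiate both sides with respect to $w$. On the right-hand side, by the fundamental theorem of calculus, the derivative equals $\nu^{*t}(f;[0,w])$ provided the integrand $y\mapsto \nu^{*t}(f;[0,y])$ is continuous at $y=w$; since $\nu^{*t}(f;\cdot)\in {\textrm{BO}}^{\,\flat}$ is infinitely divisible with no atoms except possibly at the left extremity of its support (which is $0$), the distribution function is continuous on $(0,\infty)$, so the differentiation is valid for every $w>0$. On the left-hand side the derivative is simply $\frac{\d}{\d w}\big[w\int_{[0,\infty)} e^{-xw}\mu^{\boxplus t/w}(f;\d x)\big]$, giving exactly \eqref{eqn_corollary_main}. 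Finally I would note that the relation between $f$ and $\nu$ in the statement is precisely \eqref{eqn_def_BO_class}, so $\nu^{*t}(f;\d y)=\nu^{*t}(\d y)$ and the corollary is established.

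The only genuine point requiring care — and hence the main (minor) obstacle — is justifying the differentiation: one must confirm both that $w\mapsto \nu^{*t}(f;[0,w])$ is continuous so the right-hand side differentiates cleanly, and that the left-hand side is differentiable in $w$, which follows because $w\mapsto \int_{[0,\infty)} e^{-xw}\mu^{\boxplus t/w}(f;\d x)$ is, by the right-hand side of the rescaled identity, equal to $w^{-1}\int_0^w \nu^{*t}(f;[0,y])\,\d y$, an antiderivative of a continuous function divided by $w$, hence $C^1$ on $(0,\infty)$. Everything else is the elementary change of variable described above, and I would leave those routine verifications brief.
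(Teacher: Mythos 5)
Your proposal is correct and follows exactly the route the paper indicates: rescale $t\mapsto t/w$ in \eqref{eqn_thm_main}, multiply by $w$, and differentiate in $w$ using the fundamental theorem of calculus. Your additional care about the continuity of $y\mapsto\nu^{*t}(f;[0,y])$ on $(0,\infty)$ (no atoms away from the left extremity for measures in ${\textrm{BO}}^{\flat}$, since the drift is zero) is a valid justification of the differentiation step that the paper leaves implicit.
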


\section{Examples}\label{section_Examples}

In this section we consider several examples where computations can be carried out explicitly in terms of elementary or special functions.

\subsection*{Example 1}

First we take $f(z)=z^{1-\alpha}$, where $\alpha \in (0,1)$.  It is clear that $f\in {\mathcal{CBF}}$ (see Theorem \ref{theorem_CBF}(ii)). Moreover, we have $f\in {\mathcal{CBF}}^{\,\flat}$, 
since $f(z)/z\to 0$ as $z\to +\infty$. The $*$-convolution semigroup $\nu^*(f;\d y)$ corresponds to the stable subordinator with parameter $\tilde\alpha=1-\alpha$ while the  $\boxplus$-convolution semigroup $\mu^{\boxplus t}(f;\d x)$ comes from the free stable measures with parameters $\alpha$ and $\rho=1$ (see \cite{hasebe2014}). Using  the series representation for the characteristic function of  $\mu^{\boxplus 1}(\d x)$ (see  \cite{hasebe2014}[Theorem 1.8]) coupled with the fact that  free stable distributions satisfy the scaling property $\mu^{\boxplus t}(\d x)=\mu^{\boxplus 1}(\d (xt^{-1/\alpha}))$, we compute
\begin{equation}\label{Laplace_free_stable}
\int_{[0,\infty)}e^{-zx} \mu^{\boxplus t}(\d x)=\sum\limits_{n\ge 0} \frac{(-1)^n z^{\alpha n} t^n}{n! \Gamma(2+(\alpha-1)n)}, \;\;\; z>0. 
\end{equation}
From Corollary \ref{corollary1} and the above result we derive the following formula for the cumulative distribution function of a positive stable random variable with parameter $\tilde \alpha=1-\alpha$: 
\begin{equation}\label{CDF_stable}
\nu^{*t}[0,w]=\sum\limits_{n\ge 0} \frac{(-1)^n t^n w^{-\tilde \alpha n}}{n! \Gamma(1-\tilde \alpha n)}, \;\;\; w>0.
\end{equation}
Formula \eqref{CDF_stable} is not new: it can be easily derived from the well-known series representation of the density of a stable distribution 
(see \cite{Zolotarev1986}[formula 2.4.8])
\begin{equation}\label{nu_series}
\nu^{*t}(\d y)=\frac{1}{\pi} \sum\limits_{n\ge 1} (-1)^{n-1} t^n \frac{\Gamma(1+\tilde \alpha n)}{n!} \sin(n \tilde \alpha \pi) y^{-\tilde \alpha n-1} \d y, \;\;\; y>0, 
\end{equation}
by writing $\nu^{*t}[0,w]=1-\nu^{*t}(w,\infty)$, integrating the infinite series \eqref{nu_series} term-by-term in order to compute $\nu^{*t}(w,\infty)$ and applying the reflection formula for the Gamma function and then simplifying the result. 

The computation in the above example could also be performed in the opposite direction: we could have started with the series representation \eqref{nu_series} and then derived the Laplace transform of a free stable distribution via Theorem \ref{theorem_main}, from where it would be easy to obtain the series representations for the density of free stable distributions. The latter formulas were originally derived in \cite{hasebe2014} using Mellin transform techniques, thus our Theorem \ref{theorem_main} provides an alternative method for obtaining such results. 

\subsection*{Example 2} 

Let $f(z)=\ln(1+z)$.  Then $f\in {\mathcal{CBF}}$, as can be easily verified by using Theorem \ref{theorem_CBF}(ii), and 
since $f(z)/z\to 0$ as $z\to +\infty$ we have $f\in {\mathcal{CBF}}^{\,\flat}$. It is well known that $f$ is the Laplace exponent of a Gamma process and the corresponding $*$-convolution semigroup is given by 
\begin{equation*}
\nu^{*t}(\d x)=\frac{1}{\Gamma(t)} x^{t-1} e^{-x} \d x, \;\;\; x>0. 
\end{equation*}
Thus $\nu^{*t}[0,y]=\gamma(t,y)/\Gamma(t)$, where $\gamma(t,y)$ is the incomplete Gamma function (see \cite{Jeffrey2007}[Section 8.35]). For $w,t>0$ we compute
\begin{align}\label{example1_eqn1}
\nonumber
\int_{[0,\infty)} e^{-wx} \mu^{\boxplus t}(\d x)&=
w^{-1} \int_0^w \nu^{*wt}[0,y]\d y=
\frac{w^{-1}}{\Gamma(wt)} \int_0^w \gamma(wt,y)\d y\\
\nonumber
&=\frac{w^{-1}}{\Gamma(wt)}
\Big[ y \gamma(wt,y) \Big \vert^w_0 - \int_0^{w} y \times y^{wt-1} e^{-y} \d y \Big]
\\ 
&= \frac{w^{-1}}{\Gamma(wt)}
\Big[ w \gamma(wt,w)-\gamma(wt+1,w)\Big]\\
\nonumber
&=
\frac{1}{\Gamma(wt)}
\big((1-t) \gamma(wt,w)+w^{wt-1} e^{-w} \big), 
\end{align}
where in the third step we used integration by parts and in the last step 
we used the identity 
$$
\gamma(a+1,x)=a\gamma(a,x)-x^a e^{-x}.
$$
Formula \eqref{example1_eqn1} implies a rather non-trivial result that the function
$$
w  \mapsto \frac{1}{\Gamma(wt)}
\big((1-t) \gamma(wt,w)+w^{wt-1} e^{-w} \big)
$$
is completely monotone, for every $t>0$. In particular, setting $t=1$ in \eqref{example1_eqn1} we obtain the following result
\begin{equation}
\frac{1}{\Gamma(w)}w^{w-1} e^{-w}=\int_{[0,\infty)} e^{-w x} \mu^{\boxplus 1}(\d x),\;\;\; w>0,
\end{equation}
which shows that the function $w \in (0,\infty) \mapsto w^{w-1} e^{-w}/\Gamma(w)$ is completely monotone. This fact was mentioned earlier in \cite{Burridge}[Example 6], however now we have identified that the measure appearing the Bernstein representation of this completely monotone function is free-regular and it corresponds to Voiculescu transform $\phi_{\mu^{\boxplus 1}}(z)=\ln(1-z)$. 

One could show that the inverse function of  $z\mapsto z+t \ln(1-z)$ is $F_{\mu^{\boxplus t}}(z)=1+W_{-1}(-e^{(z-1)/t}/t)$, where 
$W_{-1}$ is the non-principal real branch of the Lambert $W$-function (see  \cite{Burridge}[Proposition 4] and \cite{Corless96}). This gives us the Cauchy transform 
$G_{\mu^{\boxplus t}}$ and we could use Stieltjes inversion \eqref{Stieltjes_inversion} in order to recover the measure 
$\mu^{\boxplus t}(\d x)$ explicitly in terms of $W_{-1}$ function, however the resulting expression is complicated and does not seem to be particularly interesting or useful.

\subsection*{Example 3}

Next, we consider $f(z)=z/(z+1)$. This function belongs to ${\mathcal{CBF}}^{\,\flat}$, since it can be obtained from 
\eqref{CBF_integral_representation} by setting $a=1/2$, $b=0$ and $\rho(\d x)=\delta_1(\d x)$. 
We find 
$\phi_{\mu^{\boxplus t}}(z)=tz/(z-1)$ and $F^{-1}_{\mu^{\boxplus t}}(z)=z+tz/(z-1)$ for $z<0$. By finding the inverse function 
of $z \in (-\infty,0) \mapsto  z+tz/(z-1)$ and using the fact that $zF_t^{-1}(z) \to 1$ as $z\to -\infty$ we obtain 
$$
F_{\mu^{\boxplus t}}(z)=\frac{1}{2} \Big( z+1-t-\sqrt{(z+1-t)^2-4z}\Big), \;\;\; z<0. 
$$
Thus 
\begin{align*}
G_{\mu^{\boxplus t}}(z)=\frac{2}{z+1-t-\sqrt{(z+1-t)^2-4z}}
=\frac{1}{2z} \Big( z+1-t+\sqrt{(z+1-t)^2-4z}\Big),
\end{align*}
and using the Stieltjes inversion \eqref{Stieltjes_inversion} we conclude that 
\begin{equation}
\mu^{\boxplus t}(\d x)=\max(0,1-t) \delta_0(\d x)+\frac{1}{2\pi x} \sqrt{4t-(x-1-t)^2}\mathbf 1_{\{(1-\sqrt{t})^2<x<(1+\sqrt{t})^2\}} \d x,
\end{equation}
which is the well-known Marchenko-Pastur distribution (also known as the free Poisson distribution).

Having found explicitly the $\boxplus$-convolution semigroup $\mu^{\boxplus t}(\d x)$, our goal now is to do the same for the corresponding $*$-convolution semigroup $\nu^{*t}(\d y)$. We note that 
$$
f(z)=\frac{z}{z+1}=\int_0^{\infty} (1-e^{-zx})e^{-x} \d x, \;\;\; z>0,
$$
which means that the subordinator $Y$ is a compound Poisson process whose jumps are exponentially distributed. 
More precisely, we can write 
$$
Y_t=\sum_{n=1}^{N_t} \eta_i,
$$
where $N_t$ is the standard Poisson process and $\eta_i$ are independent random variables having distribution 
$\p(\eta_i \in \d x)=e^{-x} \d x$. From this representation we obtain
\begin{align*}
\nu^{*t}(\d y)&=\p(Y_t \in \d y)=\p(N_t=0) \delta_0(\d y)+\sum\limits_{n\ge 1} \p(N_t=n) \times \p(\eta_1+\eta_2+\dots+\eta_n \in \d y)\\
&=
e^{-t} \delta_0(\d y)+\sum\limits_{n\ge 1} \frac{t^ne^{-t}}{n!} \times \frac{y^{n-1} e^{-y}}{(n-1)!} \d y
=e^{-t} \delta_0(\d y)+\sqrt{t/y} e^{-t-y} I_1(2\sqrt{ty}) \d y,
\end{align*}
where $I_{\nu}(z)$ denotes the modified Bessel function of the second kind (see \cite{Jeffrey2007}).
Then formula \eqref{eqn_corollary_main} gives us the following result
\begin{align}
\nu^{*t}[0,w]&=e^{-t} + \int_0^w \sqrt{t/y} e^{-t-y} I_1(2\sqrt{ty}) \d y 
\\ \nonumber
&=\frac{\d}{\d w} \bigg[ \max(0,w-t)+\int_{(1-\sqrt{t/w})^2}^{(1+\sqrt{t/w})^2} e^{-xw} \frac{w}{2\pi x} \sqrt{4t/w-(x-1-t/w)^2}\d x \bigg],
\end{align} 
valid for $t,w>0$. We were not able to find this identity in \cite{Jeffrey2007} or other tables of integrals.

\subsection*{Example 4}

Here we take $f(z)=\sqrt{1+2z}$. Again, we check that $f \in {\mathcal{CBF}}^{\,\flat}$ using 
Theorem \ref{theorem_CBF}(ii) and checking that $f(z)/z\to 0$ as $z\to +\infty$. As in Example 3, we compute  
$$
F_{\mu^{\boxplus t}}(z)=z-t^2-t\sqrt{1+t^2-2z}, \;\;\; z<0, 
$$
and 
$$
G_{\mu^{\boxplus t}}(z)=\frac{z-t^2+t\sqrt{1+t^2-2z}}{z^2-t^2}, \;\;\; z<0.  
$$
The Stieltjes inversion \eqref{Stieltjes_inversion} gives us the $\boxplus$-convolution semigroup
$$
\mu^{\boxplus t}(\d x)=\max(0,1-t)\delta_t(\d x)+ \frac{t\sqrt{2x-1-t^2}}{\pi (x^2-t^2)} {\mathbf 1}_{\{x>(1+t^2)/2\}} \d x. 
$$
At the same time, it is well-known that the corresponding $*$-convolution semigroup comes from the Inverse Gaussian subordinator, so that we have 
$$
\nu^{*t}(\d y)=\frac{t}{\sqrt{2\pi y^3}} \exp(-t-(y-t)^2/(2y))\d y. 
$$
From formula \eqref{eqn_corollary_main} we obtain the following integral identity
\begin{align}
\p(Y_t\le w)&= \int_0^w \frac{t}{\sqrt{2\pi y^3}} \exp(-t-(y-t)^2/(2y))\d y\\
\nonumber &=
\frac{\d}{\d w}  \bigg[  e^{-t} \max(0,w-t)+t\int_{(1+(t/w)^2)/2}^{\infty} e^{-xw} \frac{\sqrt{2x-1-(t/w)^2}}{\pi(x^2-(t/w)^2)} \d x \bigg],
\end{align}
which again seems to be a new and non-trivial result.

\paragraph{Acknowledgements}
 The author would like to thank an anonymous referee for
careful reading of the paper and for suggesting several improvements. 
The research was supported by the
Natural Sciences and Engineering Research Council of Canada. 


\end{document}